\newtheorem{theorem}{Theorem}[section]
\newtheorem{lemma}[theorem]{Lemma}
\newtheorem{proposition}[theorem]{Proposition}
\theoremstyle{definition}
\theoremstyle{remark}
\newtheorem{quest}[theorem]{Question}
\numberwithin{equation}{section}
\def\k{{\mathbb K}}
\def\ad{{\rm ad}}
\def\HH{{\rm HH}}
\def\uq{U_q^+(\mathfrak{so}_5)}
\begin{document}

\title{Is $A_1$ of type $B_2$?}

\author{S Launois}
\address{Universit\'e de Caen Normandie, 
UMR 6139 LMNO,
14032 Caen, France}
\email{stephane.launois@unicaen.fr}

\author{I Oppong}
\address{School of Computing and Mathematical Sciences, University of Greenwich,  Old Naval College, Park Row, Greenwich, London, SE10 9LS, United Kingdom}
\email{I.Oppong@greenwich.ac.uk}

\subjclass[2020]{Primary 16T20, 17B37; Secondary 16W25}
\date{January 1, 1994 and, in revised form, June 22, 1994.}

\keywords{Quantized enveloping algebras, quantum Weyl algebras, primitive ideals, derivations}

\begin{abstract}
By a theorem of Dixmier, primitive quotients of enveloping algebras of finite-dimensional complex nilpotent Lie algebras are isomorphic to Weyl algebras. In view of this result, it is natural to consider simple quotients of positive parts of quantized enveloping algebras (and more generally of uniparameter Quantum Nilpotent Algebras) as quantum analogues of Weyl algebras. In this note, we study the Lie algebra of derivations of the simple quotients of $\uq$ of Gelfand-Kirillov dimension 2. For a specific family of such simple quotients, we prove that all derivations are inner (as in the case of Weyl algebras) whereas all other such algebras are quantum Generalized Weyl Algebras over a commutative Laurent polynomial algebra in one variable and have a first Hochschild cohomology group of dimension 1.
\end{abstract}

\maketitle

\section{Introduction}
Let $\k$ be a field of characteristic $0$ and let $q$ be a non-zero scalar which is not a root of unity. Weyl algebras are central objects in noncommutative algebra due to their appearences in a range of areas e.g. representation theory of Lie algebras; combinatorics; quantum mechanics; $\mathcal{D}$-modules to name a few. Despite many efforts over the last 60 years, the structure of Weyl algebras is still proving mysterious. For instance, only the automorphism group of the first Weyl algebra $A_1(\k)$ has been computed (by Dixmier in \cite{dixmier}) and only the irreducible representations of $A_1(\k)$ have been classified in a highly technical tour de force by Block \cite{block}. These two questions remain open for higher Weyl algebras. 

Since Weyl algebras are simple, their endomorphisms are injective, and it is an open conjecture of Dixmier \cite[Probl\`eme 1]{dixmier} that all endomorphisms are actually automorphisms. This conjecture is (stably) equivalent to the celebrated Jacobian Conjecture (see \cite{belov,tsuchimoto}), and it is still open even in the case of the first Weyl algebra $A_1(\k)$, to the best of the authors' knowledge. 

Given its central role in noncommutative algebra, it is not surprising that mathematicians have tried to generalise and/or deform Weyl algebras. This led in particular to the notion of Generalized Weyl Algebras (GWAs for short), a class of noncommutative algebras introduced by Bavula in the 1990s (see \cite{bavula}). This class of algebras includes in particular two families of (quantum) algebras that are deforming Weyl algebras: quantum Weyl algebras and Weyl-Hayashi algebras. The later appear as simple quotients of the quantum Heisenberg algebra $U_q^+ (\mathfrak{sl}_3)$ \cite{hayashi}, and were studied for instance in \cite{alevdumas,kirkmansmall}. A well-known theorem of Dixmier \cite[Th\'eor\`eme 4.7.9]{dixmierbook} 
shows that primitive quotients of enveloping algebras of nilpotent Lie algebras are isomorphic to Weyl algebras. In many respects, (uniparameter) Quantum Nilpotent Algebras (QNA for short), also known as CGL extensions \cite{gypnas,llr-ufd},  are behaving very much as enveloping algebras of nilpotent Lie algebras, and indeed include all quantum Schubert cells $U_q[w]$ which are deformations of the enveloping algebras of $\mathfrak{n}^+ \cap w(\mathfrak{n}^-)$, where $\mathfrak{n}^{\pm}$ are the nilradicals of the standard opposite Borel subalgebras of  a symmetrizable Kac-Moody algebra $ \mathfrak{g}$ and where $w$ is an element of the Weyl group of $ \mathfrak{g}$.  

As such, it is natural to think of primitive/simple quotients of QNAs as ``quantum'' Weyl algebras. This point of view is reinforced by the fact that primitive quotients of uniparameter QNAs have even Gelfand-Kirillov dimension \cite[Theorem 1.6]{bln2009}. With this idea in mind, Lopes \cite{lopes} studied the primitive ideals of $U_q^+(\mathfrak{sl}_n)$ coming from the $0$-stratum, that is the primitive ideals that do not contain any $H$-eigenvectors for the natural action by automorphisms of the torus $H:= (\k ^*)^{n-1}$ on $U_q^+ (\mathfrak{sl}_n)$. These primitive ideals are all maximal of even Gelfand-Kirillov dimension, and so feature some common properties with Weyl algebras. However, they also feature notable differences: they contain non-trivial units and not all derivations are inner in general. It is worth noting that, in the case of $\mathfrak{sl}_3$, these primitive quotients are the Weyl-Hayashi algebras and can be presented as GWAs over a (commutative) Laurent polynomial ring $\k[h^{\pm 1}]$. The presence of non-trivial units was exploited in \cite{ak1} to prove that all endomorphisms of Weyl-Hayashi algebras (and more generally of quantum GWAs over a commutative Laurent polynomial algebra) are automorphisms. In this note, we study the case of the positive part $U_q^+ (\mathfrak{so}_5)$ of the quantized enveloping algebra of $\mathfrak{so}_5$, and this will lead us to a ``quantum deformation of $A_1(\k)$''. Recently, the authors did study the case of $U_q^+ (G_2)$ \cite{lo} to obtain quantum/algebraic deformations of the second Weyl algebra. 

In \cite{sl}, primitive ideals of $U_q^+ (\mathfrak{so}_5)$ were classified. This led in particular to three non-isomorphic families of simple quotients of $U_q^+ (\mathfrak{so}_5)$ of Gelfand-Kirillov dimension 2. Out of these three families, we retrieve the Weyl-Hayashi algebras as a first family, another family of quantum GWAs over a commutative Laurent polynomial ring, and a third family $B_{\alpha, \beta}$, with $\alpha,\beta \in \k^*$, which cannot be presented as a GWA over a commutative Laurent polynomial ring. This last family can be presented as the $\k$-algebra generated by $e_1$, $e_2$, $e_3$ and $e_4$ subject to the following relations:
\begin{align*}
e_2e_1&=q^{-2}e_1e_2& e_4e_1&=q^2e_1e_4-q^2e_2&  e_4e_2&=q^{-2}e_2e_4+(1-q^{-2})\beta
\end{align*} 
and $$\frac{q^6}{q^4-1} e_2^2=  \dfrac{-(q+q^{-1})}{1-q^{-2}} \alpha -\beta e_1+e_1e_2e_4,$$
where $\alpha,\beta \in \k^*$.

Before we go any further, it is worth noting that these relations are just expressing the fact that  $B_{\alpha, \beta}$ is the quotient of $\uq$ by the maximal ideal $\langle \chi_1-\alpha, \chi_2-\beta\rangle$ of $\uq$, where $\chi_1$ and $\chi_2$ are the generators of the center of $\uq$ (see \cite[Proposition 2.7]{sl}).

For the first two families of simple quotients of $\uq$, since they can be presented as quantum GWAs over a Laurent polynomial ring, their endomorphisms were proved to be automorphisms in \cite{ak1}, and derivations were computed in \cite{ak}, proving in particular that not all derivations are inner (whereas all derivations of a Weyl algebra are inner). 

Our main aim in this note is to show that all derivations of $B_{\alpha, \beta}$ when $\alpha\beta\neq 0$ are inner. This makes the algebra $B_{\alpha, \beta}$ with $\alpha\beta\neq 0$ into a quantum deformation of the first Weyl algebra which is simple, has Gelfand-Kirillov dimension 2, has no non-trivial units, and has all its derivations inner. This somehow justifies the claim that ``$A_1(\k)$ is of type $B_2$'' since none of the primitive quotients of the quantum Heisenberg algebra $U_q^+(\mathfrak{sl}_3)$ have all these properties.

This paper is organised as follows. We recall some basic properties of $\uq$ and $B_{\alpha, \beta}$ in Sections \ref{ev1} and \ref{sec3} respectively. More importantly, we study Cauchon's deleting derivation algorithm \cite{ca} for the QNA $\uq$ in Section \ref{ev1}. This theory  allows us to construct an embedding: 
\begin{equation*}
B_{\alpha,\beta}\subset R=B_{\alpha,\beta}[e_4^{-1}] 
\end{equation*}
and to present $R$ as a GWA. This presentation of $R$ is obtained through the elements $f_1$ and $f_2$ constructed at the first step of the deleting derivation algorithm (DDA for short) for $\uq$. We then extend any derivation $D$ of $B_{\alpha,\beta}$ uniquely to a derivation of the GWA $R$. Thanks to a result of Kitchin \cite[Proposition 1.3]{ak}, derivations of $R$ are known to be the sum of an inner derivation and a scalar derivation (i.e. a derivation that acts on the generators $f_1,~f_2,~ e_3$ and $e_4$ of $R$ by multiplication by scalars). Having a perfect description of $D$ as a derivation $R$, we then describe it as a derivation of $B_{\alpha,\beta}$ by controlling the effect of the DDA  on our derivation $D$. Similarly to Weyl algebras, we conclude that every derivation of $B_{\alpha,\beta}$ is inner provided $\alpha\beta\neq 0.$

\section{Deleting derivation algorithm for the Quantum Nilpotent Algebra $\uq$}
\label{ev1}

\subsection{$\uq$ as a Quantum Nilpotent Algebra} 

The quantized enveloping algebra $\uq$ is the $\k$-algebra generated by two indeterminates $E_1$ and $E_4$ subject to the quantum Serre relations:
\begin{eqnarray*}
 & & E_1^2 E_4   - (q^2 + q^{-2} ) E_1 E_4 E_1 + E_4 E_1^2 = 0 \\
 & & E_4^3 E_1 -  (q^2 + 1 + q^{-2} ) E_4^2 E_1 E_4 + (q^2 + 1 + q^{-2} ) E_4 E_1 E_4^2  - E_1 E_4^3 = 0.
\end{eqnarray*}

To each reduced decomposition of the longest element of the Weyl group of $\mathfrak{so}_5$, we can associate a PBW basis for $\uq$ by using Lusztig automorphisms. In the present case, this leads us to introduce the following elements (see \cite[Appendix A]{DeGraaf2001} with $w_0=s_{\beta}s_{\alpha}s_{\beta}s_{\alpha}$, where the root vectors $E_{\beta}$, $E_{\alpha+\beta}$, $E_{2\alpha +\beta }$ and $E_{\alpha}$ from \cite{DeGraaf2001} corresponds to our $E_1$, $E_2$, $E_3$ and $E_4$ respectively):
$$E_2:=E_1E_4-q^{-2}E_4E_1$$
and $$E_3:=\frac{1}{q+q^{-1}}\left(E_2E_4-E_4E_2 \right).$$

By results of  Levendorskii-Soibelman and Lusztig (see \cite[Theorem I.6.8]{bg}), the monomials $E_1^{i_1}E_2^{i_2}E_3^{i_3}E_4^{i_4}$ with $i_1,i_2,i_3,i_4 \in \mathbb{Z}_{\geq 0}$ form a PBW basis of $\uq$, so that $\uq$ can be seen as the $\k$-algebra generated by  $E_1$, $E_2$, $E_3$ and $E_4$ subject to the relations below:
\begin{align*}
E_2E_1&=q^{-2}E_1E_2& E_4E_1&=q^2E_1E_4-q^2E_2& E_3E_1&=E_1E_3+\dfrac{q-q^3}{1+q^2} E_2^2\\ E_4E_2&=E_2E_4-(q+q^{-1})E_3&
E_3E_2&=q^{-2}E_2E_3& E_4E_3&=q^{-2}E_3E_4.
\end{align*} 

This presentation leads to the following presentation of $\uq$ as an iterated Ore extension: 
$$B:=\uq=\mathbb{K}[E_1][E_2;\sigma_2][E_3;\sigma_3,\delta_3][E_4;\sigma_4,\delta_4],$$ where $\sigma_2$ is an automorphism of $\mathbb{K}[E_1]$ defined by
$\sigma_2(E_1)=q^{-2}E_1;$
$\sigma_3$ is an automorphism of $\mathbb{K}[E_1][E_2;\sigma_2]$ defined by
$\sigma_3(E_1)=E_1, \ \sigma_3(E_2)=q^{-2}E_2;$
$\delta_3$ is a $\sigma_3$-derivation of $\mathbb{K}[E_1][E_2;\sigma_2]$  defined by
$\delta_3(E_1)=(q-q^3)/(1+q^2)E_2^2, \ \delta_3(E_2)=0;$
$\sigma_4$ is an automorphism of $\mathbb{K}[E_1][E_2;\sigma_2][E_3;\sigma_3,\delta_3]$ defined by
$\sigma_4(E_1)=q^2E_1, \ \sigma_4(E_2)=E_2, \ \sigma_4(E_3)=q^{-2}E_3$ and
$\delta_4$ is a $\sigma_4$-derivation of
 $\mathbb{K}[E_1][E_2;\sigma_2][E_3;\sigma_3,\delta_3]$ defined by
$\delta_4(E_1)=-q^2E_2, \ \delta_4(E_2)=-(q+q^{-1})E_3, \ \delta_4(E_3)=0.$
Note in particular that $B$ is a noetherian domain.

This presentation of $\uq$ as an iterated Ore extension makes $\uq$ into a so-called (uniparameter) QNA, (see for instance \cite[Proposition 6.1.2 and Lemme 6.2.1]{ca}). At this stage, we do not need to recall the definition of a QNA. However, we point out that, first, this implies that all primes of $B$ are completely prime, and then that this will allow us to apply the deleting derivation theory of Cauchon \cite{ca} to $\uq$ with this iterated Ore extension structure.  It is worth noting that the iterated Ore extension presentation of $\uq$ used in \cite{dumas,sl} does not allow to apply Cauchon's deleting derivation theory. 

\subsection{Deleting derivation algorithm of $\uq$}
\label{ddauq}
Since the iterated Ore extension $B$ is a QNA, similarly to \cite[Sec 2.3]{lo}, Cauchon's theory of deleting derivation algorithm (DDA for short) (see \cite{ca}) can be used to construct the following elements of the skew field  of fractions $\mathrm{Frac}(B)$ of $B$:  
\begin{align*}
E_{i,5}&:=E_i \qquad (i\in [1,4])&
E_{1,4}&=E_1+p_1E_2E_4^{-1}+p_2E_3E_4^{-2}\\
E_{2,4}&=E_2+p_3E_3E_4^{-1}&
E_{3,4}&=E_4 \\
 E_{4,4}&=E_4&
E_{1,3}&=E_{1,4}+p_4E_{2,4}^2E_{3,4}^{-1}\\
T_1:&=E_{1,2}=E_{1,3}&
T_2:&=E_{2,2}=E_{2,3}=E_{2,4}\\
T_3:&=E_{3,2}=E_{3,3}=E_{3,4}=E_3&
T_4:&=E_{4,2}=E_{4,3}=E_{4,4}=E_4,
\end{align*}
where 
\begin{align*} 
p_1=&\dfrac{1}{q^{-2}-1}=\dfrac{q^2}{1- q^{2}},&~~~ p_2=&\dfrac{q^{-1}+q^{-3}}{(1+q^{-2})(1-q^{-2})^2}=\dfrac{q^3}{(q^2-1)^2},\\
 p_3=&\dfrac{-(q+q^{-1})}{1-q^{-2}}=\dfrac{-(q^3+q)}{q^2-1}, &~~~p_4=&\dfrac{q-q^3}{(1-q^{-4})(1+q^2)}=\dfrac{-q^5}{(1+q^2)^2}. 
 \end{align*}

Recall from \cite[Lemma 3.1]{dumas} that the center of $B$ is a polynomial algebra $Z(B)=\k[\chi_1, \chi_2],$
where $$ \chi_1:=T_1T_3=E_{1,4}E_{3,4}+p_4E_{2,4}^2= E_1E_3+p_4E_2^2$$ and
$$\chi_2:=T_2T_4=E_2E_4+p_3E_3.$$

Note that these two central generators take a simpler form when expressed in the new elements $T_i$.

\section{Simple quotients of $U_q^+(B_2)$}
\label{sec3}

Primitive/maximal ideals of $\uq$ were classified in \cite{sl}, where it is proved (see \cite[Propositions 2.6 and 2.7]{sl}) that the only maximal ideals of height 2 are the ideals $\langle \chi_1-\alpha,\chi_2-\beta\rangle$ with $(\alpha,\beta )\in \mathbb{K}^2\setminus\{(0,0)\}$. 

Our aim in this note is to compute the derivations of the corresponding simple quotients of $\uq$. So we set:  
$$B_{\alpha,\beta}:=\frac{B}{\langle \chi_1-\alpha, \chi_2-\beta\rangle},$$ where $(\alpha,\beta )\in \mathbb{K}^2\setminus\{(0,0)\}.$ 

The algebra $B_{\alpha,\beta}$ is a simple noetherian domain with Gelfand-Kirillov dimension $2$ (\cite[Proposition 2.8]{sl}) and its center is reduced to $\k$ by \cite[Proposition 3.7]{sl}.  From \cite[Observation 3.1]{sl}, we also have that $B_{\alpha,\beta}$ is a quantum deformation of the first Weyl algebra $A_1(\k)$, hence $B_{\alpha,\beta}$ can be thought of as a quantum first Weyl algebra.  

In the case where $\alpha$ or $\beta$ is zero (but not both), then $B_{0,\beta}$ and $B_{\alpha,0}$ are Generalized Weyl Algebras (GWA) over a commutative Laurent polynomial ring in one variable by \cite[Propositions 3.9 and 3.10]{sl}. As such, their derivations were computed in \cite{ak}. Thus, from now on, we focus our attention on the case where $\alpha\beta \neq 0$.

Denote the canonical image of each $E_i$  by $e_i$  in  $B_{\alpha,\beta}.$  We have that $B_{\alpha,\beta}$ satisfies the following commutation relations:
\begin{align*}
e_2e_1&=q^{-2}e_1e_2& e_4e_1&=q^2e_1e_4-q^2e_2& e_3e_1&=e_1e_3+\dfrac{q-q^3}{1+q^2} e_2^2\\ e_3e_2&=q^{-2}e_2e_3& e_4e_2&=e_2e_4-(q+q^{-1})e_3& e_4e_3&=q^{-2}e_3e_4,
\end{align*}
and
\begin{align}
\begin{split}
  e_3&= k_1\beta-k_1e_2e_4 \\
e_2^2&= \alpha k_2-\beta k_1k_2e_1+k_1k_2e_1e_2e_4,
\end{split}\label{c5E0}
\end{align}
where $k_1=1/p_3=\dfrac{1-q^2}{q^3+q}$ and $k_2=1/p_4=-q^{-5}(1+q^2)^2.$

The following relations can easily be checked by induction.

\begin{lemma}
\label{c5l2} For each $i\in \mathbb{N}$: 
\begin{enumerate}
\item $e_3e_1^i=e_1^ie_3+
d[i]e_1^{i-1}e_2^2,$
 where $d[i]=\dfrac{(q-q^3)(1-q^{-4i})}{(1-q^{-4})(1+q^2)}.$ Note in particular that $d[0]=0.$ 
\item $e_4e_1^i=q^{2i}e_1^ie_4-f[i]e_1^{i-1}e_2,$ 
 where $f[i]=\dfrac{q^{2i}(1-q^{-4i})}{1-q^{-4}}.$ Note in particular that $f[0]=0.$
\end{enumerate}
\end{lemma}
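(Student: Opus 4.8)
The two claimed identities in Lemma \ref{c5l2} are each of the form ``move a power of $e_1$ past $e_3$ (resp. past $e_4$)'', so the natural approach is a straightforward induction on $i$ using the relevant degree-one commutation relation.

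\textbf{Part (1).} The plan is to induct on $i$, with base case $i=0$ being the trivial identity $e_3 = e_3$ (and indeed $d[0]=0$). For the inductive step, assume $e_3e_1^i = e_1^i e_3 + d[i]\, e_1^{i-1}e_2^2$. Then I would compute
\[
e_3 e_1^{i+1} = (e_3 e_1^i) e_1 = e_1^i e_3 e_1 + d[i]\, e_1^{i-1}e_2^2 e_1.
\]
Now apply the relation $e_3 e_1 = e_1 e_3 + \frac{q-q^3}{1+q^2}e_2^2$ to the first term, and use $e_2 e_1 = q^{-2} e_1 e_2$ (hence $e_2^2 e_1 = q^{-4} e_1 e_2^2$) to the second term. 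This gives
\[
e_3 e_1^{i+1} = e_1^{i+1} e_3 + \tfrac{q-q^3}{1+q^2}\, e_1^i e_2^2 + q^{-4} d[i]\, e_1^i e_2^2,
\]
so the coefficient of $e_1^i e_2^2$ is $\frac{q-q^3}{1+q^2} + q^{-4} d[i]$. It then remains to check the scalar identity $d[i+1] = \frac{q-q^3}{1+q^2} + q^{-4} d[i]$ with $d[i] = \frac{(q-q^3)(1-q^{-4i})}{(1-q^{-4})(1+q^2)}$, which is a routine geometric-series verification: writing $c = \frac{q-q^3}{1+q^2}$, one has $d[i] = c\,\frac{1-q^{-4i}}{1-q^{-4}}$ and $q^{-4}d[i] + c = c\big(\frac{q^{-4}-q^{-4(i+1)}}{1-q^{-4}} + 1\big) = c\,\frac{1 - q^{-4(i+1)}}{1-q^{-4}} = d[i+1]$.

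\textbf{Part (2).} The argument is entirely parallel. Base case $i=0$: $e_4 = e_4$, consistent with $f[0]=0$. Inductive step: assuming $e_4 e_1^i = q^{2i} e_1^i e_4 - f[i]\, e_1^{i-1} e_2$, write
\[
e_4 e_1^{i+1} = q^{2i} e_1^i (e_4 e_1) - f[i]\, e_1^{i-1} e_2 e_1,
\]
apply $e_4 e_1 = q^2 e_1 e_4 - q^2 e_2$ and $e_2 e_1 = q^{-2} e_1 e_2$, and collect terms to get
\[
e_4 e_1^{i+1} = q^{2(i+1)} e_1^{i+1} e_4 - q^{2(i+1)} e_1^i e_2 - q^{-2} f[i]\, e_1^i e_2,
\]
so the coefficient of $e_1^i e_2$ is $-\big(q^{2(i+1)} + q^{-2} f[i]\big)$, and one checks $f[i+1] = q^{2(i+1)} + q^{-2} f[i]$ against $f[i] = \frac{q^{2i}(1-q^{-4i})}{1-q^{-4}}$ — again a short geometric-series computation. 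Since these are the only relations needed and both reduce to elementary scalar recurrences, there is no real obstacle here; the ``hard part'' is merely bookkeeping the powers of $q$ correctly, and one should double-check the normalization of $d[i]$ and $f[i]$ against the degree-one relations as recorded in Section \ref{sec3}.
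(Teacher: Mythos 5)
Your induction is correct and is exactly what the paper intends: the text simply states that these relations "can easily be checked by induction," and your argument (inducting on $i$, peeling off one $e_1$, and applying the degree-one relations $e_3e_1=e_1e_3+\frac{q-q^3}{1+q^2}e_2^2$, $e_4e_1=q^2e_1e_4-q^2e_2$, $e_2e_1=q^{-2}e_1e_2$, followed by the scalar recurrences $d[i+1]=\frac{q-q^3}{1+q^2}+q^{-4}d[i]$ and $f[i+1]=q^{2(i+1)}+q^{-2}f[i]$) is the standard verification. The base cases and the geometric-series checks all come out as you say, and $d[1],f[1]$ match the degree-one relations, so nothing is missing.
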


A linear basis of $B_{\alpha,\beta}$ was obtained in \cite{sl}. Noting that our generators $E_1$, $E_2$ and $E_4$ of $\uq$ correspond to the generators $e_1$, $e_3$ and $e_2$ of $U_{q^{-1}}^+(B_2)$ in \cite{sl}, the linear basis of $B_{\alpha,\beta}$ from  \cite[Proposition 3.2]{sl} can be expressed as follows with the notation of this note. 
\begin{proposition}
\label{c5E1}
The set $\mathcal{E}=\{e_1^ie_4^j, e_1^ie_2e_4^j\mid i,j\in \mathbb{N}\}$ is a $\mathbb{K}$-basis of $B_{\alpha,\beta}.$
\end{proposition}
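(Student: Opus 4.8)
The plan is to show that the listed set $\mathcal{E}$ spans $B_{\alpha,\beta}$ and is linearly independent, the latter by a Gelfand-Kirillov dimension / Hilbert series argument. For the spanning claim, I would start from the PBW basis $\{E_1^{i_1}E_2^{i_2}E_3^{i_3}E_4^{i_4}\}$ of $B=\uq$, whose canonical images $\{e_1^{i_1}e_2^{i_2}e_3^{i_3}e_4^{i_4}\}$ span $B_{\alpha,\beta}$. The two relations in \eqref{c5E0} then let me eliminate $e_3$ and high powers of $e_2$: the first relation $e_3=k_1\beta-k_1e_2e_4$ replaces every occurrence of $e_3$ by a linear combination of $1$ and $e_2e_4$, so we may assume $i_3=0$; and the second relation $e_2^2=\alpha k_2-\beta k_1k_2 e_1+k_1k_2 e_1e_2e_4$ expresses $e_2^2$ in terms of monomials with $e_2$-degree $\le 1$, so we may assume $i_2\in\{0,1\}$. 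The only care needed is that after such substitutions the resulting words are not yet in the ordered form $e_1^ie_2^\varepsilon e_4^j$; one reorders them using the commutation relations and Lemma \ref{c5l2}, checking that the straightening process terminates — e.g. by an induction on total degree together with the observation that each commutation or substitution strictly decreases an appropriate well-ordered statistic (such as the lexicographic triple counting $e_3$-degree, then $e_2$-degree, then the number of ``out-of-order'' pairs). This shows $\mathcal{E}$ is a spanning set.

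For linear independence, the cleanest route is to quote the fact (from \cite{sl}, Proposition 3.2) that this is exactly the basis already exhibited there under the dictionary between our generators and those of $U_{q^{-1}}^+(B_2)$; so strictly speaking the Proposition is a restatement and the work is only in matching notation. If instead one wants a self-contained argument, I would compare growth: $B_{\alpha,\beta}$ has $\gkdim = 2$ by \cite[Proposition 2.8]{sl}, and the spanning set $\mathcal{E}$ has exactly $2(n+1)^2$ elements of total degree $\le 2n$ in $e_1,e_2,e_4$; since a proper quotient of the span would force $\gkdim < 2$, while the simplicity and known GK-dimension pin it at $2$, the count is tight and $\mathcal{E}$ must be a basis. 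Alternatively, one evaluates a putative linear dependence in a faithful representation, or uses that $B_{\alpha,\beta}$ is a domain of the expected size built as an iterated Ore-type extension $\k\langle e_1,e_4\rangle$-module on $\{1,e_2\}$.

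The main obstacle is the bookkeeping in the straightening argument: because $e_4e_1^i$ and $e_3e_1^i$ pick up lower-order correction terms (the $d[i]$ and $f[i]$ of Lemma \ref{c5l2}), and because substituting for $e_2^2$ reintroduces an $e_4$ on the right that must then be commuted back past any remaining $e_1$'s, one has to be sure the rewriting system is confluent and terminating rather than merely ``morally obvious.'' I would handle this by fixing the rewrite rules (use \eqref{c5E0} left-to-right on $e_3$ and on $e_2^2$; use the commutation relations to move $e_1$ left, then $e_2$, then $e_4$ right) and exhibiting a monomial order on words in $e_1,e_2,e_3,e_4$ that is respected by every rule, so that termination is automatic; confluence then follows since the leading term of each straightened monomial is the corresponding element of $\mathcal{E}$. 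Everything else — the explicit scalars, the inductions of Lemma \ref{c5l2} — is routine.
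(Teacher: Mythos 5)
Your ``cleanest route'' --- quoting \cite[Proposition 3.2]{sl} and translating via the generator dictionary $E_1,E_4,E_2 \leftrightarrow e_1,e_2,e_3$ of $U_{q^{-1}}^+(B_2)$ --- is exactly what the paper does; Proposition \ref{c5E1} is stated there with no proof beyond that citation and the matching of notation, so on this point your proposal is correct and coincides with the paper. Two cautions about your self-contained alternative, though. First, the spanning/rewriting part is plausible, but the lexicographic statistic you propose ($e_3$-degree, then $e_2$-degree, then inversions) is not actually decreasing under every rule: commuting $e_4$ past $e_2$ via $e_4e_2=e_2e_4-(q+q^{-1})e_3$ re-creates an $e_3$, so the first coordinate can go back up; a grading with $\deg e_1=\deg e_4=1$, $\deg e_2=2$, $\deg e_3=3$ (under which every relation is degree-nonincreasing) is a safer foundation for termination. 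Second, and more seriously, the Gelfand--Kirillov dimension argument for linear independence does not work as stated: $\gkdim$ only sees the leading order of growth, so a spanning set with $\sim N^2$ elements in degree $\leq N$ can perfectly well carry nontrivial dependences in an algebra of $\gkdim=2$ (the defect need only be of lower order). Matching leading-order counts does not make the count ``tight,'' so that step is a genuine gap; linear independence really does need to be imported from \cite{sl} (or re-proved there by a faithful representation / Ore-extension argument), which is presumably why the paper simply cites it.
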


\section{Derivations of $B_{\alpha,\beta}$} 
\label{der}
In this section, we aim to compute the set of $\k$-derivations of the algebra $B_{\alpha,\beta}.$  Our strategy is to prove that a localization of $B_{\alpha,\beta}$ is a quantum GWA. This isomorphism was obtained through the first step of the DDA since this GWA presentation is naturally expressed in terms of the elements produced by the first step of the DDA, that is $E_{1,4}$, $E_{2,4}$, $E_3$ and $E_4$. 

Once this isomorphism is established, we can extend any derivation of $B_{\alpha,\beta}$ to a derivation of this GWA. It then follows from work of Kitchin \cite{ak} that derivations of a (simple) quantum GWA are the sum of an inner derivation and a central/scalar derivation. To conclude, we will need to control the effect of the deleting derivation algorithm on our derivation. 

Unless otherwise stated, we assume that $\alpha \beta \neq 0$.

\subsection{Embedding $B_{\alpha,\beta}$ into a quantum GWA}

At this stage, we recall that the (quantum) generalized Weyl algebra $\k[h^{\pm 1}](\sigma_q,a)$ associated to the automorphism $\sigma_q$ of $\k[h^{\pm 1}]$ defined by $\sigma_q(h)=q^2h$ and to a nonzero element $a \in \k[h^{\pm 1}]$ is the $\k[h^{\pm 1}]$-algebra generated by two indeterminates $x$ and $y$ subject to the relations
$$yx=a,~ xy=\sigma_{q}(a),~ xh=q^2hx, \mbox{ and }yh=q^{-2}hy.$$ 

Observe that $E_4 \notin \langle \chi_1-\alpha,\chi_2-\beta\rangle$. Indeed, otherwise, $E_4$ and $[E_4,E_2]=-(q+q^{-1})E_3$ would belong to $ \langle \chi_1-\alpha,\chi_2-\beta\rangle$ and then $\chi_2= E_2E_4+p_3E_3$ would also belong to   $ \langle \chi_1-\alpha,\chi_2-\beta\rangle$, a contradiction since $\beta \neq 0$. 

Set $e_4:= E_4+  \langle \chi_1-\alpha,\chi_2-\beta\rangle \in B_{\alpha,\beta}$. Since the multiplicative system generated by $E_4$ in $B$ satisfies the Ore condition on both side by \cite[Lemme 2.1]{ca}, it follows that  $\{e_4^i \mid i\in \mathbb{N}\}$ is a multiplicative system of regular elements of $B_{\alpha,\beta}$ satisfying the Ore condition on both sides. 

We set $R:= B_{\alpha,\beta}[e_4^{-1}]=\frac{B[E_4^{-1}]}{ \langle \chi_1-\alpha,\chi_2-\beta\rangle [E_4^{-1}]}$. Our aim in this section is to prove that $R$ is a GWA of the form $\k[h^{\pm 1}](\sigma_q,a)$. To prove this, we will make use of the elements created by the first step of the DDA. Namely, recall that the first step of the algorithm leads to the following elements of $\mathrm{Frac} (B)$: 
\begin{align*}
E_{1,4}&=E_1+p_1E_2E_4^{-1}+p_2E_3E_4^{-2} \mbox{ and }E_{2,4}=E_2+p_3E_3E_4^{-1}.&
\end{align*}
Recall also that $$ \chi_1=E_{1,4}E_{3}+p_4E_{2,4}^2 \mbox{ and }
\chi_2=E_{2,4}E_4.$$ 

This leads us to introduce the following elements of $R$: 
$$f_1:= e_1+p_1e_2e_4^{-1}+p_2e_3e_4^{-2} $$
and 
$$f_2:= e_2+p_3e_3e_4^{-1}.$$

Clearly, $f_1,~f_2, e_3, e_4^{\pm 1}$ can be expressed in terms of the generators $e_1,~e_2,~e_3,~e_4^{\pm 1}$ of $R$. 
Note that the converse is also true since $e_2= f_2-p_3e_3e_4^{-1}$ and $e_1= f_1-p_1e_2e_4^{-1}-p_2e_3e_4^{-2} = f_1-p_1(f_2-p_3e_3e_4^{-1})e_4^{-1}-p_2e_3e_4^{-2}$. 

At this point, it is worth noting that the fact that $\chi_2 = \beta$ in $ B_{\alpha,\beta}$ forces $f_2$ to be invertible in $R$ and $e_4:= \beta f_2^{-1}$. As a consequence, $R$ is generated by $f_1,~f_2^{\pm 1}$ and $ e_3$, and one can easily show using various universal properties that 
$$R \simeq \frac{ \k\langle f_1,f_2^{\pm 1}, e_3\rangle }{\langle f_1 f_2 = q^2 f_2 f_1, ~ e_3f_2=q^{-2}f_2e_3, ~ e_3f_1=f_1e_3+\frac{q-q^3}{1+q^2}f_2^2, ~f_1e_3-\frac{q^5}{(1+q^2)^2}f_2^2=\alpha \rangle }.$$

We are ready to prove that $R$ is indeed a quantum GWA.

\begin{proposition}
\label{RGWA}
$R$ is isomorphic to the GWA $\k[h^{\pm 1}](\sigma_q,\alpha+\frac{q}{(q^2+1)^2}h^2)$ by an isomorphism that sends $f_2$ to $h$, $f_1$ to $x$ and $e_3$ to $y$.
\end{proposition}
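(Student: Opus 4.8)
The plan is to build the isomorphism explicitly using the presentation of $R$ already derived just before the statement, namely
$$R \simeq \frac{ \k\langle f_1,f_2^{\pm 1}, e_3\rangle }{\langle f_1 f_2 = q^2 f_2 f_1, ~ e_3f_2=q^{-2}f_2e_3, ~ e_3f_1=f_1e_3+\tfrac{q-q^3}{1+q^2}f_2^2, ~f_1e_3-\tfrac{q^5}{(1+q^2)^2}f_2^2=\alpha \rangle }.$$
Set $a := \alpha + \frac{q}{(q^2+1)^2} h^2 \in \k[h^{\pm 1}]$ and recall that the GWA $\k[h^{\pm 1}](\sigma_q, a)$ is generated by $h^{\pm 1}, x, y$ with $yx = a$, $xy = \sigma_q(a) = \alpha + \frac{q}{(q^2+1)^2} q^4 h^2 = \alpha + \frac{q^5}{(q^2+1)^2} h^2$, $xh = q^2 hx$, $yh = q^{-2}hy$. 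First I would check that the assignment $h \mapsto f_2$, $x \mapsto f_1$, $y \mapsto e_3$ respects all the defining relations of the GWA, using the relations in the displayed presentation of $R$: the commutation $xh = q^2hx$ matches $f_1 f_2 = q^2 f_2 f_1$; $yh = q^{-2}hy$ matches $e_3 f_2 = q^{-2} f_2 e_3$; the relation $yx = a$ becomes $e_3 f_1 = \alpha + \frac{q}{(q^2+1)^2} f_2^2$, which is exactly a rearrangement of $f_1 e_3 - \frac{q^5}{(1+q^2)^2} f_2^2 = \alpha$ combined with $e_3 f_1 = f_1 e_3 + \frac{q-q^3}{1+q^2} f_2^2$ — here one must verify the numerical identity $\frac{q-q^3}{1+q^2} - \frac{q^5}{(1+q^2)^2} = \frac{q}{(q^2+1)^2}$, i.e. $(q-q^3)(1+q^2) - q^5 = q$, which is $q + q^3 - q^3 - q^5 - q^5$... let me just note that this is a short polynomial check to be carried out in the writeup; and finally $xy = \sigma_q(a)$ becomes $f_1 e_3 = \alpha + \frac{q^5}{(1+q^2)^2} f_2^2$, which is the last relation in the presentation verbatim. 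By the universal property of the GWA (it is the quotient of the free product of $\k[h^{\pm 1}]$ and $\k\langle x,y\rangle$ by those relations), this defines an algebra homomorphism $\phi \colon \k[h^{\pm 1}](\sigma_q,a) \to R$.

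Next I would produce the inverse. Since $R$ is generated by $f_1, f_2^{\pm 1}, e_3$ and $\phi$ hits all these generators, $\phi$ is surjective. For injectivity I would exhibit a homomorphism $\psi \colon R \to \k[h^{\pm 1}](\sigma_q,a)$ in the opposite direction by sending the presentation generators $f_1 \mapsto x$, $f_2^{\pm1} \mapsto h^{\pm1}$, $e_3 \mapsto y$ and checking the four defining relations of $R$ in the GWA — these are the same computations as above read backwards, so no new work is needed. Then $\psi \circ \phi$ and $\phi \circ \psi$ are the identity on generators, hence the identity, so $\phi$ is an isomorphism. Alternatively, and perhaps more cleanly, one invokes a dimension/basis count: $\mathcal{E} = \{e_1^i e_4^j, e_1^i e_2 e_4^j\}$ is a basis of $B_{\alpha,\beta}$ (Proposition~\ref{c5E1}), from which one reads off a basis of $R = B_{\alpha,\beta}[e_4^{-1}]$, and on the other side the GWA $\k[h^{\pm1}](\sigma_q,a)$ has the standard PBW-type basis $\{h^m x^n, h^m y^n : m \in \mathbb{Z}, n \geq 0\}$; matching these under $\phi$ (after re-expressing $e_1, e_2, e_4$ in terms of $f_1, f_2, e_3$ as done in the text) gives injectivity. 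I would go with the two-sided-inverse argument as it is the least computation.

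The only genuine obstacle is bookkeeping: one must be confident that the presentation of $R$ quoted in the excerpt ("one can easily show using various universal properties") is correct, and that $f_2$ is genuinely invertible in $R$ with $e_4 = \beta f_2^{-1}$ — this is where the hypothesis $\beta \neq 0$ enters, via $\chi_2 = E_{2,4}E_4 = \beta$ which becomes $f_2 e_4 = \beta$ in $R$. With that in hand, everything reduces to the single scalar identity $(q-q^3)(1+q^2) - q^5 = q$ (equivalently $-q^5 = -q^5$ after cancellation, which one should double-check carefully since a sign or power error here would break the claimed form of $a$) and to matching $\sigma_q(a)$ against the $f_1 e_3$ relation. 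No deep argument is required; the proposition is essentially a change of generators packaging the first step of the DDA, and the proof should be short once the relations are lined up.
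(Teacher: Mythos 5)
Your construction of $\phi$ and the surjectivity argument coincide with the paper's. The divergence is at injectivity, and your preferred route has a circularity problem: you invert $\phi$ by appealing to the displayed presentation of $R$ by the four relations on $f_1, f_2^{\pm 1}, e_3$, but that presentation is only \emph{asserted} in the text (``one can easily show using various universal properties\dots''), and establishing that those four relations are a \emph{complete} set of defining relations for $R$ is essentially equivalent to the proposition itself --- the quotient of $\k\langle f_1,f_2^{\pm1},e_3\rangle$ by those relations \emph{is} the GWA after trivially rearranging $yx=a$ and $xy=\sigma_q(a)$. So ``check the four defining relations of $R$ in the GWA'' proves nothing new; the hard content is that $R$ has no further relations. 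The paper closes this gap without ever using exactness of the presentation: both $R$ and the GWA are domains of Gelfand--Kirillov dimension $2$ (the former by \cite[Proposition 2.8]{sl}, the latter by squeezing it between the quantum plane $\k_{q^2}[h,x]$ and the quantum torus), so the surjection $\phi$ must have zero kernel by \cite[Proposition 3.15]{KrauseLenagan}, since a nonzero kernel would drop the dimension of the image by at least $1$. Your fallback (matching the basis $\mathcal{E}'$ of $R$ against the GWA basis under $\phi$) is sound in principle but requires a genuine triangularity computation re-expressing $h^m x^n$ and $h^m y^n$ in the $e_1^i e_4^j,\ e_1^i e_2 e_4^j$ basis; it is not the ``no new work'' step you suggest.

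Two smaller points. First, your scalar identity is written with the wrong sign: the check needed is $q^5+(q-q^3)(1+q^2)=q$ (which holds), not $(q-q^3)(1+q^2)-q^5=q$ (which fails); you flagged the need to recheck this, and the claimed form of $a$ is indeed correct. Second, your identification $e_4=\beta f_2^{-1}$ from $\chi_2=E_{2,4}E_4=\beta$ and the role of $\beta\neq 0$ are exactly as in the paper.
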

\begin{proof}
By universal property, it is easy to check that there exists an homomorphism $$\phi: \k[h^{\pm 1}](\sigma_q,\alpha+\frac{q}{(q^2+1)^2}h^2) \rightarrow R$$ defined by $\phi(h)=f_2$, $\phi(x)=f_1$ and $\phi (y)=e_3$. By the above discussion, this homomorphism is surjective. Moreover, the Gelfand-Kirillov dimension of the domains $R$ and $\k[h^{\pm 1}](\sigma_q,\alpha+\frac{q}{(q^2+1)^2}h^2)$ is equal to 2. This follows from \cite[Proposition 2.8]{sl} for $R$. For the GWA $\k[h^{\pm 1}](\sigma_q,\alpha+\frac{q}{(q^2+1)^2}h^2)$, this follows from the fact that it contains the quantum plane $\k_{q^2}[h,x]$ and is contained in the quantum torus $\k_{q^2}[h^{\pm 1},x^{\pm 1}]$ (and both have Gelfand-Kirillov dimension 2). Hence $\phi$ is an isomorphism by \cite[Proposition 3.15]{KrauseLenagan}.
\end{proof}

As a GWA, the algebra $R$ has a linear basis formed by the monomials $\{f_2^if_1^j\mid i,j \in \mathbb{N}\} \cup \{f_2^ie_3^j \mid i,j \in \mathbb{N} \mbox{ with }j\geq 1\}$. However, it also possesses a linear basis inherited from $B_{\alpha,\beta}$, see Proposition \ref{c5E2}. 

\begin{proposition}
\label{c5E2}
The set $\mathcal{E}'=\{e_1^ie_4^j, e_1^ie_2e_4^j\mid (i,j)\in \mathbb{N} \times \mathbb{Z}\}$ is a $\mathbb{K}$-basis of $R$.
\end{proposition}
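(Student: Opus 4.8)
The plan is to leverage the $\mathbb{K}$-basis $\mathcal{E}=\{e_1^ie_4^j,\,e_1^ie_2e_4^j\mid i,j\in\mathbb{N}\}$ of $B_{\alpha,\beta}$ from Proposition \ref{c5E1} and the fact that $R=B_{\alpha,\beta}[e_4^{-1}]$ is the localization of $B_{\alpha,\beta}$ at the Ore set $\{e_4^i\mid i\in\mathbb{N}\}$. Since this Ore set consists of regular elements and every element of $R$ has the form $b e_4^{-n}$ for some $b\in B_{\alpha,\beta}$ and $n\in\mathbb{N}$, the spanning statement is essentially immediate: writing $b$ in the basis $\mathcal{E}$ and multiplying on the right by $e_4^{-n}$ shows that $\mathcal{E}'$ spans $R$ as a $\mathbb{K}$-vector space (here one uses that $e_4^j e_4^{-n}=e_4^{j-n}$ with $j-n$ ranging over all of $\mathbb{Z}$, and that $e_1^i$ and $e_1^ie_2$ commute appropriately with powers of $e_4$, or rather that no commutation is needed since the monomials in $\mathcal{E}$ already have $e_4$ on the right).

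Next I would establish linear independence of $\mathcal{E}'$. Suppose a finite $\mathbb{K}$-linear combination $\sum \lambda_{i,j} e_1^ie_4^j + \sum \mu_{i,j} e_1^ie_2e_4^j = 0$ in $R$, with $(i,j)$ ranging over a finite subset of $\mathbb{N}\times\mathbb{Z}$. Choose $N\in\mathbb{N}$ large enough that $j+N\geq 0$ for every $j$ appearing, and multiply the relation on the right by $e_4^N$. Since $e_4$ is regular (invertible in $R$), the resulting relation $\sum \lambda_{i,j} e_1^ie_4^{j+N} + \sum \mu_{i,j} e_1^ie_2e_4^{j+N} = 0$ holds in $R$; but every monomial now lies in $B_{\alpha,\beta}$ and is a distinct element of the basis $\mathcal{E}$ of $B_{\alpha,\beta}$. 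By Proposition \ref{c5E1}, all the $\lambda_{i,j}$ and $\mu_{i,j}$ vanish, which proves independence.

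The only genuinely non-routine point is the passage between ``a relation holds in $R$'' and ``a relation holds in $B_{\alpha,\beta}$'': one must know that the localization map $B_{\alpha,\beta}\to R$ is injective, equivalently that the Ore set $\{e_4^i\}$ consists of regular (non-zero-divisor) elements. This was already recorded in Section \ref{der} when constructing $R$ (the multiplicative system generated by $E_4$ satisfies the two-sided Ore condition by \cite[Lemme 2.1]{ca}, and $B_{\alpha,\beta}$ is a domain, so $e_4$ is regular), so I would simply cite that. Apart from this, the proof is a short and standard localization argument, and I would present it in roughly half a page, perhaps remarking explicitly that the spanning set $\mathcal{E}'$ is obtained from $\mathcal{E}$ by formally allowing negative exponents on $e_4$.
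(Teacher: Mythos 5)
Your argument is correct: the paper states Proposition \ref{c5E2} without proof, and the standard localization argument you give (spanning via the form $be_4^{-n}$, independence by right-multiplying by a large power of $e_4$ and invoking injectivity of $B_{\alpha,\beta}\to R$, which holds since $B_{\alpha,\beta}$ is a domain) is exactly the routine justification the authors omit. Nothing is missing.
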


\subsection{Derivations of $B_{\alpha,\beta}$ when $\alpha\beta\neq 0$}

Let Der$(B_{\alpha,\beta})$ denote the set of $\mathbb{K}$-derivations of $B_{\alpha,\beta}$ and $D\in \text{Der}(B_{\alpha,\beta}).$ One can extend $D$ to a derivation of the GWA $R$ (still denoted by $D$). Derivations of quantum GWA have been described in \cite{ak} and we deduce from the isomorphism $\phi$ from Proposition \ref{RGWA} the following result:

 \begin{lemma} 
\label{L1}
There exist $x \in R$ and $\lambda \in \k$ such that $D=\mathrm{ad}_x + \delta_{\lambda}$, where $\delta_{\lambda} \in \mathrm{Der}(R) $ is defined by $\delta_{\lambda}(f_2)=0$, $\delta_{\lambda}(f_1)=\lambda f_1$ and $\delta_{\lambda} (e_3)=-\lambda e_3$.

Moreover $\delta_{\lambda}(e_4)=0$.
\end{lemma}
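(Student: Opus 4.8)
The plan is to leverage Proposition~\ref{RGWA}, which identifies $R$ with the quantum GWA $\k[h^{\pm 1}](\sigma_q, a)$ where $a = \alpha + \frac{q}{(q^2+1)^2}h^2$, and then quote Kitchin's description \cite[Proposition 1.3]{ak} of the derivations of a simple quantum GWA over a Laurent polynomial ring. First I would record that the GWA $R$ is simple: this follows from the simplicity of $B_{\alpha,\beta}$ (\cite[Proposition 2.8]{sl}) together with the fact that $R$ is a localization of $B_{\alpha,\beta}$, or alternatively from the standard simplicity criterion for GWAs applied to $a = \alpha + \frac{q}{(q^2+1)^2}h^2$ (the key point being $\alpha \neq 0$, so $a$ and its $\sigma_q$-orbit generate the unit ideal). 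Since a derivation of $B_{\alpha,\beta}$ extends uniquely to the Ore localization $R = B_{\alpha,\beta}[e_4^{-1}]$, we get a derivation of the GWA, and Kitchin's result then gives $D = \mathrm{ad}_x + \delta$ where $x \in R$ and $\delta$ is a ``scalar'' derivation acting on the canonical generators $h, x_{\mathrm{gen}}, y_{\mathrm{gen}}$ by scalars.

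Next I would transport this through the isomorphism $\phi$ of Proposition~\ref{RGWA}, under which $h \mapsto f_2$, $x_{\mathrm{gen}} \mapsto f_1$, $y_{\mathrm{gen}} \mapsto e_3$. So the scalar derivation $\delta$ of $R$ satisfies $\delta(f_2) = \mu f_2$, $\delta(f_1) = \lambda f_1$, $\delta(e_3) = \nu e_3$ for scalars $\mu, \lambda, \nu \in \k$. The remaining constraints come from the defining relations of $R$ in the presentation displayed just before Proposition~\ref{RGWA}. Applying $\delta$ to $e_3 f_2 = q^{-2} f_2 e_3$ forces $\nu + \mu = \mu + \nu$, which is vacuous; but applying $\delta$ to $f_1 e_3 - \frac{q^5}{(1+q^2)^2} f_2^2 = \alpha$ (a scalar, hence killed by $\delta$) gives $(\lambda + \nu) f_1 e_3 - \frac{q^5}{(1+q^2)^2}(2\mu) f_2^2 = 0$; since $f_1 e_3$ and $f_2^2$ are linearly independent in $R$ (e.g. by the GWA basis), we get $\lambda + \nu = 0$ and $\mu = 0$. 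This is exactly the claimed form: $\delta_\lambda(f_2) = 0$, $\delta_\lambda(f_1) = \lambda f_1$, $\delta_\lambda(e_3) = -\lambda e_3$.

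For the final sentence, $\delta_\lambda(e_4) = 0$, I would use the relation $e_4 = \beta f_2^{-1}$ observed in the text (forced by $\chi_2 = \beta$, so $f_2 = E_{2,4} E_4$ becomes $e_4 = \beta f_2^{-1}$). Since $\delta_\lambda$ is a derivation and $\delta_\lambda(f_2) = 0$, we get $\delta_\lambda(f_2^{-1}) = -f_2^{-1}\delta_\lambda(f_2)f_2^{-1} = 0$, hence $\delta_\lambda(e_4) = \beta\,\delta_\lambda(f_2^{-1}) = 0$. I do not anticipate a serious obstacle here; the only point requiring mild care is citing Kitchin's theorem in the precise form needed (derivations of a simple quantum GWA over $\k[h^{\pm 1}]$ decompose as inner plus scalar), and then pinning down which scalars survive the relations --- the computation with the relation $f_1 e_3 - \frac{q^5}{(1+q^2)^2} f_2^2 = \alpha$ is what forces $\mu = 0$ and $\nu = -\lambda$, and the linear independence of $f_1 e_3$ and $f_2^2$ in the GWA is the fact I would invoke there.
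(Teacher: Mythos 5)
Your proposal is correct and follows the same route as the paper: extend $D$ to the localization $R$, identify $R$ with the quantum GWA via Proposition~\ref{RGWA}, and invoke Kitchin's inner-plus-scalar decomposition — the paper simply cites Kitchin's result for the precise form of the scalar part (and the identity $e_4=\beta f_2^{-1}$), whereas you rederive the constraints $\mu=0$, $\nu=-\lambda$ from the relation $f_1e_3-\frac{q^5}{(1+q^2)^2}f_2^2=\alpha$. One small imprecision: $f_1e_3$ equals $\alpha+\frac{q^5}{(1+q^2)^2}f_2^2$ in $R$, so it is not itself a GWA basis monomial; its linear independence from $f_2^2$ (equivalently, the step forcing $(\lambda+\nu)\alpha=0$) rests on $\alpha\neq 0$, which you should make explicit there.
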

 \begin{proof}
 This follows from \cite[Proposition 2.3]{ak} and the fact that $e_4= \beta f_2^{-2}$.
 \end{proof}

We are now ready to describe $D$ as a derivation of $B_{\alpha,\beta}.$
\begin{lemma}
\label{5cl3}
\begin{enumerate}
\item $x\in B_{\alpha,\beta}$;
\item $\lambda=0$;
\item  $D=\ad_x.$
\end{enumerate}
\end{lemma}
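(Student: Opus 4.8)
The plan is to pin down the derivation $D$ by using the two linear bases at our disposal: the basis $\cs' = \{e_1^ie_4^j, e_1^ie_2e_4^j\}$ of $R$ from Proposition \ref{c5E2} (which controls membership in $B_{\alpha,\beta}$: an element of $R$ lies in $B_{\alpha,\beta}$ exactly when only non-negative powers of $e_4$ occur), and the GWA description $D = \ad_x + \delta_\lambda$ from Lemma \ref{L1}. The strategy is to first show $\lambda = 0$, then upgrade $x \in R$ to $x \in B_{\alpha,\beta}$ (up to modifying $x$ by a scalar, which does not change $\ad_x$), and conclude.

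First I would prove $\lambda = 0$. The point is that $D$ is a derivation of $B_{\alpha,\beta}$, so $D(e_4) \in B_{\alpha,\beta}$, i.e. its expansion in the basis $\cs'$ involves only non-negative powers of $e_4$. Now $D(e_4) = \ad_x(e_4) + \delta_\lambda(e_4) = \ad_x(e_4) = [x, e_4]$ by Lemma \ref{L1}. Writing $x \in R$ in the basis $\cs'$, one expands $[x,e_4]$ using the commutation relations and Lemma \ref{c5l2}; alternatively, and more cleanly, work in the GWA coordinates $f_1, f_2^{\pm 1}, e_3$. Here $e_4 = \beta f_2^{-2}$, and using $f_2^{-1}$-conjugation on $f_1$ and $e_3$ one controls the lowest power of $f_2$ appearing in any $[x,e_4]$. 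Comparing with the fact that $e_4$ itself, hence $D(e_4)$, must have $f_2$-degree bounded below, forces the scalar part to vanish: $\delta_\lambda$ would contribute a nonzero multiple of $\lambda$ to the "pure $e_4$" coefficient (since $\delta_\lambda(e_4)=0$ but the $\ad_x$ part cannot by itself produce the constraint unless $\lambda=0$). I would carry this out by applying $D$ to the defining relations of $B_{\alpha,\beta}$, for instance $e_2 e_1 = q^{-2} e_1 e_2$ and $e_4 e_1 = q^2 e_1 e_4 - q^2 e_2$, and tracking the coefficient of a suitable basis monomial; the $\lambda$-terms survive on one side only and must cancel, giving $\lambda = 0$.

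Once $\lambda = 0$, we have $D = \ad_x$ with $x \in R$. Write $x = \sum_{i,j} a_{ij} e_1^i e_4^j + \sum_{i,j} b_{ij} e_1^i e_2 e_4^j$ with $(i,j) \in \N \times \Z$ in the basis $\cs'$, and let $x = x_+ + x_-$ be the split into the part with $j \ge 0$ (so $x_+ \in B_{\alpha,\beta}$) and the part with $j < 0$. We must show $x_-$ is central in $B_{\alpha,\beta}$, hence scalar since $Z(B_{\alpha,\beta}) = \k$; then $\ad_x = \ad_{x_+}$ with $x_+ \in B_{\alpha,\beta}$, proving (1) and (3). To see $x_-$ is central: for every generator $e_\ell$ ($\ell = 1,2,3,4$) we have $[x_+, e_\ell] = D(e_\ell) - [x_-, e_\ell]$, and the left side lies in $B_{\alpha,\beta}$ while $D(e_\ell) \in B_{\alpha,\beta}$, so $[x_-, e_\ell] \in B_{\alpha,\beta}$; but a direct computation of $[x_-, e_4]$ using $e_4 e_1^i = q^{2i} e_1^i e_4 - f[i] e_1^{i-1} e_2$ (Lemma \ref{c5l2}(2)) shows that the strictly-negative-$e_4$-power part of $[x_-, e_4]$ vanishes only if all $a_{ij}, b_{ij}$ with $j<0$ vanish except possibly $a_{00}$ with $j<0$—and one rules those out too by examining $[x_-, e_1]$ and $[x_-, e_2]$, or more efficiently by using the GWA presentation where $x_-$ would have to commute with $f_2$ forcing it to be a Laurent polynomial in $f_2$ alone, then commuting with $f_1$ and $e_3$ forces it constant.

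The step I expect to be the main obstacle is controlling the negative-$e_4$-power part of $x$: the extension of $D$ to $R$ is only known abstractly, so one has no a priori control over which monomials occur in $x$, and the bookkeeping between the two bases (the $\cs'$-basis inherited from $B_{\alpha,\beta}$ versus the GWA monomial basis $\{f_2^i f_1^j\} \cup \{f_2^i e_3^j\}$) must be handled carefully — the relations $e_2 = f_2 - p_3 e_3 e_4^{-1}$, $e_4 = \beta f_2^{-2}$, etc., mix the gradings. The cleanest route, which I would follow, is to do the whole negative-part analysis in the GWA coordinates: decompose $x \in R = \k[h^{\pm1}](\sigma_q, a)$ into its $\Z$-graded components with respect to the grading $\deg h = 0$, $\deg f_1 = 1$, $\deg e_3 = -1$, observe that $\ad_x$ acting on the degree-$0$, $1$, and $-1$ generators $h, f_1, e_3$ together with the requirement $D(e_i) \in B_{\alpha,\beta}$ pins down each graded piece of $x$, and then translate back. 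I would expect that, after discarding a central scalar, $x$ must have only non-negative powers of $f_2^{-2} = e_4/\beta$ "after clearing denominators appropriately", yielding $x \in B_{\alpha,\beta}$; the verification that no genuinely negative-$e_4$ contribution can persist is where the real work lies, and it relies crucially on simplicity of $B_{\alpha,\beta}$ (equivalently $Z(B_{\alpha,\beta}) = \k$) to kill the would-be central remainder.
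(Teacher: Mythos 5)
Your overall architecture (split $x=x_++x_-$ in the basis $\mathcal{E}'$, kill the negative part, kill $\lambda$) matches the paper's, but there is a genuine gap in the order and mechanism of your argument for $\lambda=0$, and it is not a cosmetic one. You propose to prove $\lambda=0$ \emph{first}, by examining $D(e_4)$ or by ``applying $D$ to the defining relations.'' Neither can work: $\delta_\lambda(e_4)=0$ identically, so $D(e_4)=\mathrm{ad}_x(e_4)$ carries no information about $\lambda$ whatsoever; and applying $D$ to the defining relations of $B_{\alpha,\beta}$ yields only identities, since $D$ is by hypothesis a genuine derivation of $B_{\alpha,\beta}$ and automatically annihilates its relations. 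The only way $\lambda$ becomes visible is through a generator on which $\delta_\lambda$ produces something \emph{outside} $B_{\alpha,\beta}$ --- the right choice is $e_2$, where $\delta_\lambda(e_2)=-p_3\lambda e_3e_4^{-1}$ --- but to conclude from $D(e_2)\in B_{\alpha,\beta}$ that $\delta_\lambda(e_2)\in B_{\alpha,\beta}$ you must already know $\mathrm{ad}_x(e_2)\in B_{\alpha,\beta}$, i.e.\ you need part (1) first. Your ordering is therefore circular as stated. The paper resolves this by proving (1) \emph{before} (2), and by running the negative-part analysis against the single generator $e_3$, for which the $\delta_\lambda$-contribution $-\lambda e_3$ lies in $B_{\alpha,\beta}$ regardless of $\lambda$ and so can be discarded; only then does it test $e_2$ to extract $\lambda=0$. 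If you insist on your ordering, you would at minimum have to restrict your ``$[x_-,e_\ell]\in B_{\alpha,\beta}$'' step to $\ell\in\{3,4\}$, which undercuts your plan of using $e_1$ and $e_2$ to finish off the surviving terms.

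The second, smaller issue is that the actual content of part (1) is left as a sketch. The paper's proof is a careful computation: it multiplies $w=\mathrm{ad}_{x_-}(e_3)$ by $e_4^{-j_0-1}$ (where $j_0$ is the most negative $e_4$-exponent in $x_-$), reduces modulo $B_{\alpha,\beta}e_4$ using the relations $e_3-k_1\beta\in B_{\alpha,\beta}e_4$ and $e_2^2-\alpha k_2+\beta k_1k_2e_1\in B_{\alpha,\beta}e_4$ from \eqref{c5E0} together with Lemma \ref{c5l2}, and reads off the top coefficients to get $a_{i,j_0}(q^{-2j_0}-q^{-4i})=0$ and $b_{i,j_0}(q^{-2j_0}-q^{-4i-2})=0$, which vanish only because $q$ is not a root of unity, $j_0<0$ and $\beta\neq 0$. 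Your proposed alternative of working in the GWA coordinates $(f_1,f_2^{\pm1},e_3)$ and showing $x_-$ is central is plausible in outline, but it is precisely the translation between the two bases (via $e_2=f_2-p_3e_3e_4^{-1}$, $e_4=\beta f_2^{-2}$) where the difficulty sits, and you acknowledge this without resolving it; note also that your commutator $[x_-,e_4]$ genuinely fails to see the terms $a_{0,j}e_4^{j}$, so the case analysis you defer is not vacuous. As it stands the proposal identifies the right objects but does not constitute a proof.
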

\begin{proof}
(1) Since $x\in R=B_{\alpha,\beta}[e_4^{-1}],$ it follows from Proposition \ref{c5E2} that one can write  $$x=\displaystyle\sum_{(i,j)\in I_1}a_{i,j}e_1^i e_4^j +\displaystyle\sum_{(i,j)\in I_2}b_{i,j}e_1^i e_2e_4^j,$$ where $a_{i,j},b_{i,j} \in \k$ are families of scalars and, $I_1$ and $I_2$ are finite subsets of $\mathbb{N}\times \mathbb{Z}.$
Write $x=x_++x_-,$ where $$x_+=\displaystyle\sum_{\substack{(i,j)\in I_1\\ j\geq 0}}a_{i,j}e_1^i e_4^j
+\displaystyle\sum_{\substack{(i,j)\in I_2\\j\geq 0}}b_{i,j}e_1^i e_2e_4^j$$ and $$x_-=\displaystyle\sum_{\substack{(i,j)\in I_1\\ j<0}}a_{i,j}e_1^i e_4^j
+\displaystyle\sum_{\substack{(i,j)\in I_2\\j<0}}b_{i,j}e_1^i e_2e_4^j.$$ Clearly, $x_+\in B_{\alpha,\beta}.$ We want to show that $x_-=0.$ Suppose that
 $x_-\neq 0.$ Then, there exists a minimum $j_0<0$ such that $a_{i,j_0}\neq 0$ or $b_{i,j_0}\neq 0$ for some $i\geq 0,$ and $a_{i,j}= b_{i,j}=0$ for all $j<j_0.$ 
 Write $$x_-=\displaystyle\sum_{\substack{(i,j)\in I_1\\ j_0\leq j<0}}a_{i,j}e_1^i e_4^j
+\displaystyle\sum_{\substack{(i,j)\in I_2\\j_0\leq j<0}}b_{i,j}e_1^i e_2e_4^j.$$ 

Now, $D(e_3)=\text{ad}_{x_+}(e_3)+\text{ad}_{x_-}(e_3)+\delta_{\lambda}(e_3)\in B_{\alpha,\beta}.$ Since $\delta_{\lambda}(e_3)=-\lambda e_3\in B_{\alpha,\beta}$, we obtain that $w:=\text{ad}_{x_-}(e_3)=x_-e_3-e_3x_-\in B_{\alpha,\beta}.$ This implies that $we_4^{-j_0-1}\in B_{\alpha,\beta}$, that is:
\begin{align*}
&\displaystyle\sum_{\substack{(i,j)\in I_1\\ j_0\leq j<0}}a_{i,j}e_1^i e_4^j e_3e_4^{-j_0-1} 
+\displaystyle\sum_{\substack{(i,j)\in I_2\\j_0\leq j<0}}b_{i,j}e_1^i e_2e_4^j e_3e_4^{-j_0-1} \\
- & \displaystyle\sum_{\substack{(i,j)\in I_1\\ j_0\leq j<0}}a_{i,j}e_3e_1^i e_4^{j-j_0-1} 
-\displaystyle\sum_{\substack{(i,j)\in I_2\\j_0\leq j<0}}b_{i,j}e_3e_1^i e_2e_4^{j-j_0-1} \in B_{\alpha,\beta}.
\end{align*}
Using the relation $e_3 e_4 =q^2 e_4 e_3$, this leads to:
\begin{align*}
&\displaystyle\sum_{\substack{(i,j)\in I_1\\ j_0\leq j<0}}a_{i,j}q^{-2j}e_1^i e_3 e_4^{j-j_0-1} 
+\displaystyle\sum_{\substack{(i,j)\in I_2\\j_0\leq j<0}}b_{i,j}q^{-2j} e_1^i e_2e_3 e_4^{j-j_0-1} \\
- &\displaystyle\sum_{\substack{(i,j)\in I_1\\ j_0\leq j<0}}a_{i,j}e_3e_1^i e_4^{j-j_0-1} 
-\displaystyle\sum_{\substack{(i,j)\in I_2\\j_0\leq j<0}}b_{i,j}e_3e_1^i e_2e_4^{j-j_0-1}  \in B_{\alpha,\beta}.
\end{align*}
The only terms that could not belong to $ B_{\alpha,\beta}$ in the above sums are those for which the exponents of $e_4$ is negative. Since $j -j_0-1 \geq 0$ for all $j>j_0$, we obtain that:
\begin{align*} 
 & \displaystyle\sum_{(i,j_0)\in I_1}a_{i,j_0}q^{-2j_0}e_1^i e_3 e_4^{-1} 
+\displaystyle\sum_{(i,j_0)\in I_2}b_{i,j_0}q^{-2j_0} e_1^i e_2e_3 e_4^{-1} \\
- & \displaystyle\sum_{(i,j_0)\in I_1}a_{i,j_0}e_3e_1^i e_4^{-1} 
-\displaystyle\sum_{(i,j_0)\in I_2}b_{i,j_0}e_3e_1^i e_2e_4^{-1} \in B_{\alpha,\beta}.
\end{align*}
This is equivalent to 
\begin{align*} 
& \displaystyle\sum_{(i,j_0)\in I_1}a_{i,j_0}q^{-2j_0}e_1^i e_3 
+\displaystyle\sum_{(i,j_0)\in I_2}b_{i,j_0}q^{-2j_0} e_1^i e_2e_3  \\
- & \displaystyle\sum_{(i,j_0)\in I_1}a_{i,j_0}e_3e_1^i 
-\displaystyle\sum_{(i,j_0)\in I_2}b_{i,j_0}e_3e_1^i e_2\in B_{\alpha,\beta}e_4.
\end{align*}
Since $e_3 -k_1\beta \in  B_{\alpha,\beta}e_4$ by \eqref{c5E0}, this leads to 
\begin{align*} 
& \displaystyle\sum_{(i,j_0)\in I_1}a_{i,j_0}q^{-2j_0}k_1 \beta e_1^i  
+\displaystyle\sum_{(i,j_0)\in I_2}b_{i,j_0}q^{-2j_0} k_1 \beta e_1^i e_2 \\
- & \displaystyle\sum_{(i,j_0)\in I_1}a_{i,j_0}e_3e_1^i 
-\displaystyle\sum_{(i,j_0)\in I_2}b_{i,j_0}e_3e_1^i e_2\in B_{\alpha,\beta}e_4.
\end{align*}
Using the straightening relation between $e_3$ and $e_1^i$ from Lemma  \ref{c5l2} and the relation $e_3 e_2 =q^{-2}e_2e_3$, we get: 
\begin{align*}
&\displaystyle\sum_{(i,j_0)\in I_1}a_{i,j_0}q^{-2j_0}k_1 \beta e_1^i  
+\displaystyle\sum_{(i,j_0)\in I_2}b_{i,j_0}q^{-2j_0} k_1 \beta e_1^i e_2
- \displaystyle\sum_{(i,j_0)\in I_1}a_{i,j_0}e_1^ie_3 \\
- &\displaystyle\sum_{(i,j_0)\in I_1}a_{i,j_0}d[i]e_1^{i-1}e_2^2 
-\displaystyle\sum_{(i,j_0)\in I_2}b_{i,j_0}q^{-2}e_1^i  e_2e_3  -\displaystyle\sum_{(i,j_0)\in I_2}b_{i,j_0}d[i]e_1^{i-1}  e_2^3\in B_{\alpha,\beta}e_4.
\end{align*}
Since $e_3 -k_1\beta \in  B_{\alpha,\beta}e_4$ and $e_2^2 -\alpha k_2 +\beta k_1k_2 e_1 \in  B_{\alpha,\beta}e_4$ by \eqref{c5E0}, we obtain: 
\begin{align*}
&w':=\displaystyle\sum_{(i,j_0)\in I_1}a_{i,j_0}q^{-2j_0}k_1 \beta e_1^i  
+\displaystyle\sum_{(i,j_0)\in I_2}b_{i,j_0}q^{-2j_0} k_1 \beta e_1^i e_2
- \displaystyle\sum_{(i,j_0)\in I_1}a_{i,j_0}k_1\beta e_1^i \\
 - & \displaystyle\sum_{(i,j_0)\in I_1}a_{i,j_0}d[i]\alpha k_2 e_1^{i-1}
+ \displaystyle\sum_{(i,j_0)\in I_1}a_{i,j_0}d[i] \beta k_1 k_2 e_1^{i}  -\displaystyle\sum_{(i,j_0)\in I_2}b_{i,j_0}q^{-2}k_1\beta e_1^i  e_2 \\
 - & \displaystyle\sum_{(i,j_0)\in I_2}b_{i,j_0}d[i]\alpha k_2 e_1^{i-1} e_2 
+ \displaystyle\sum_{(i,j_0)\in I_2}b_{i,j_0}d[i] q^{-2}  \beta k_1 k_2 e_1^i  e_2
\in B_{\alpha,\beta}e_4.
\end{align*}
(Notice that a coefficient $q^{-2}$ has appeared in the last sum since we had to use the relation $e_2e_1=q^{-2}e_1e_2$.)

Note that all the monomials involved in the above sums are elements of the basis $\mathcal{E}$ of $B_{\alpha,\beta}$ not involving $e_4$. However, for an element to belong to $B_{\alpha,\beta}e_4$, it needs to have all the coefficients of the basis elements $e_1^i$ and $e_1^i e_2$ to be equal to $0$. So, if $i$ is maximum such that $(i,j_0)\in I_1$ and $a_{i,j_0}\neq 0$, we obtain by inspection of the coefficient of $e_i^i$ in $w'$:
$$a_{i,j_0}q^{-2j_0}k_1 \beta - a_{i,j_0}k_1\beta + a_{i,j_0}d[i] \beta k_1 k_2 = 0.$$
Since $k_1 \beta \neq 0$ and $d[i]  k_2 = 1-q^{-4i}$, this leads to 
$$a_{i,j_0}(q^{-2j_0} -q^{-4i})=0.$$
Since $q$ is not a root of unity and $j_0<0$ whereas $i\geq 0$, we conclude that $a_{i,j_0}=0$. This is a contradiction, and so all coefficients $a_{i,j_0}$ are equal to $0$.

By definition of $j_0$, this means that there exists $i\in \mathbb{N}$ such that $b_{i,j_0} \neq 0$. Let choose such $i$ maximal. Then the coefficients of $e_1^ie_2$ in $w'$ must be equal to $0$ for $w'$ to belong to $B_{\alpha,\beta}e_4 $. This leads to 
$$b_{i,j_0}q^{-2j_0} k_1 \beta - b_{i,j_0}q^{-2}k_1\beta +b_{i,j_0}d[i] q^{-2} \beta k_1 k_2=0. $$
Since $k_1 \beta \neq 0$ and $d[i]  k_2 = 1-q^{-4i}$, this leads to 
$$b_{i,j_0} \left( q^{-2j_0}  - q^{-2}+ q^{-2}(1-q^{-4i})\right)=0, $$
that is
$$b_{i,j_0}( q^{-2j_0}  -q^{-4i-2})=0. $$
Again, $q^{-2j_0}  -q^{-4i-2} \neq 0$ since $q$ is not a root of unity, and $-2j_0 > 0$ while $-4i-2 <0$. Hence, $b_{i,j_0}=0$, a contradiction with our choice of $j_0$. 

Hence $x_-=0$, as desired.

(2) Recall  that $\delta_{\lambda} (e_3)=-\lambda e_3.$ Also, from Lemma \ref{L1}, $\delta_{\lambda}(e_4)=\delta(e_2)=0$ and 
$\delta_{\lambda}(f_1)=\lambda f_{1}$.  Recall that  $e_2=f_2-p_3e_3e_4^{-1}$ in $R.$ Therefore, $\delta_{\lambda}(e_2)=-p_3\lambda e_3e_4^{-1}$. Now,
$D(e_2)=\text{ad}_x(e_2)+\delta_{\lambda}(e_2)\in B_{\alpha,\beta}.$ Since $\text{ad}_x(e_2)\in B_{\alpha,\beta},$ we have that
 $\delta_{\lambda}(e_2)=-p_3\lambda e_3e_4^{-1}\in B_{\alpha,\beta}. $ This implies that $p_3\lambda e_3\in B_{\alpha,\beta}e_4.$ Hence, $p_3\lambda k_1\beta-p_3\lambda k_1e_2e_4\in B_{\alpha,\beta}e_4$ since $e_3=k_1\beta-k_1e_2e_4$ by \eqref{c5E0}. Clearly, $e_2e_4\in B_{\alpha,\beta}e_4,$ it follows that  $p_3\lambda k_1\beta\in B_{\alpha,\beta}e_4.$  Since $k_1,p_3, \beta \neq 0,$ this forces $\lambda=0.$ Otherwise, we would have a contradiction given the basis $\mathcal{E}$ of $B_{\alpha,\beta}$ from Proposition \ref{c5E1}.

(3) Since $\lambda=0$, then $\delta_{\lambda}$ coincides with the $0$ derivation on a generating set for $R$. Hence $\delta_{\lambda}=0$ and $D=\mathrm{ad}_x$ with $x\in B_{\alpha,\beta}$ as desired.
\end{proof}

\subsection{Summary}

We are now ready to state the main result of this note which shows that the simple quotients $B_{\alpha,\beta}$ present similarities with the first Weyl algebra when $\alpha \beta \neq 0$. Before we proceed, let us recall that the first Hochschild cohomology group of $B_{\alpha,\beta}$ (denoted by $\HH^1(B_{\alpha,\beta})$) is defined by $$\HH^1(B_{\alpha,\beta}):=\frac{\text{Der}(B_{\alpha,\beta})}{\text{InnDer}(B_{\alpha,\beta})},$$
where the subset $\text{InnDer}(B_{\alpha,\beta}):=\{\text{ad}_x\mid x\in B_{\alpha,\beta}\}$ of $\text{Der}(B_{\alpha,\beta})$
is the set of inner derivations of $B_{\alpha,\beta}.$

 Recall that $\HH^1(B_{\alpha,\beta})$ is a free module over $Z(B_{\alpha,\beta})=\k.$

\begin{theorem}
Assume that $\alpha, \beta \neq 0$. 
\begin{enumerate}
\item  Every derivation of $B_{\alpha,\beta}$ is inner and $\HH^1(B_{\alpha,\beta})=0.$
\item $\dim\HH^1(B_{\alpha, 0})= \dim \HH^1(B_{0,\beta})=1.$
\end{enumerate}
 \end{theorem}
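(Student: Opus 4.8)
The plan is to assemble the two parts of the theorem from the work already done in Section~\ref{der}. For part~(1), everything has in fact been established: Lemma~\ref{5cl3} shows that any $D \in \mathrm{Der}(B_{\alpha,\beta})$ equals $\mathrm{ad}_x$ for some $x \in B_{\alpha,\beta}$, which is precisely the statement that $\mathrm{Der}(B_{\alpha,\beta}) = \mathrm{InnDer}(B_{\alpha,\beta})$, hence $\HH^1(B_{\alpha,\beta}) = \mathrm{Der}(B_{\alpha,\beta})/\mathrm{InnDer}(B_{\alpha,\beta}) = 0$. So the proof of~(1) is a one-line appeal to Lemma~\ref{5cl3} together with the definition of $\HH^1$ recalled just above the theorem statement.

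For part~(2), the point is that $B_{\alpha,0}$ and $B_{0,\beta}$ are, by \cite[Propositions 3.9 and 3.10]{sl}, Generalized Weyl Algebras over the commutative Laurent polynomial ring $\k[h^{\pm 1}]$, so their derivations were computed in \cite{ak} (specifically Kitchin's \cite[Proposition 1.3 / Proposition 2.3]{ak} applies). I would invoke that computation: derivations of such a (simple) quantum GWA decompose as the sum of an inner derivation and a scalar derivation $\delta_\lambda$ (acting by $\delta_\lambda(h)=0$, $\delta_\lambda(x)=\lambda x$, $\delta_\lambda(y)=-\lambda y$), and no nonzero $\delta_\lambda$ is inner in these cases. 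This gives $\HH^1(B_{\alpha,0})$ and $\HH^1(B_{0,\beta})$ one-dimensional over $Z = \k$, represented by the class of $\delta_1$. The contrast with part~(1) is exactly that in the $\alpha\beta\neq 0$ case the extra constraint $\chi_2 = \beta \neq 0$ forces $f_2$ (hence $e_4$) to be invertible, which is what kills the scalar parameter $\lambda$ in Lemma~\ref{5cl3}(2); when $\beta = 0$ or $\alpha = 0$ that rigidity disappears and $\lambda$ survives.

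The only mild obstacle is making sure the GWA presentations of $B_{\alpha,0}$ and $B_{0,\beta}$ from \cite{sl} match the normalization under which \cite{ak} computes derivations, and checking that these GWAs are simple (so that the scalar derivation $\delta_1$ is genuinely non-inner and $\HH^1$ is exactly $1$-dimensional rather than potentially larger). Both points are recorded in the cited references: \cite[Propositions 3.9 and 3.10]{sl} give the explicit GWA presentations, and simplicity follows since $B_{\alpha,0}$, $B_{0,\beta}$ are simple quotients of $\uq$ by \cite[Proposition 2.8]{sl}. Thus the proof of~(2) reduces to quoting \cite{ak} and observing that the scalar derivation is non-trivial in cohomology, with the freeness of $\HH^1$ over $Z(B) = \k$ (recalled before the statement) pinning the dimension at $1$.

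\begin{proof}
(1) By Lemma~\ref{5cl3}, every $D \in \mathrm{Der}(B_{\alpha,\beta})$ is of the form $D = \ad_x$ with $x \in B_{\alpha,\beta}$; that is, $\mathrm{Der}(B_{\alpha,\beta}) = \mathrm{InnDer}(B_{\alpha,\beta})$. Hence $\HH^1(B_{\alpha,\beta}) = \mathrm{Der}(B_{\alpha,\beta})/\mathrm{InnDer}(B_{\alpha,\beta}) = 0$.

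(2) When $\alpha = 0$ (resp. $\beta = 0$, but not both), $B_{\alpha,0}$ (resp. $B_{0,\beta}$) is a simple Generalized Weyl Algebra over the commutative Laurent polynomial ring $\k[h^{\pm 1}]$ by \cite[Propositions 3.9 and 3.10]{sl} and \cite[Proposition 2.8]{sl}. By \cite[Proposition 2.3]{ak}, every derivation of such a quantum GWA is the sum of an inner derivation and a scalar derivation $\delta_\lambda$, and no nonzero scalar derivation is inner. Therefore $\HH^1$ is generated over $Z = \k$ by the class of $\delta_1$, and since $\HH^1$ is free over $Z(B_{\alpha,0}) = Z(B_{0,\beta}) = \k$, we get $\dim \HH^1(B_{\alpha,0}) = \dim \HH^1(B_{0,\beta}) = 1$.
\end{proof}
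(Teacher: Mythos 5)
Your proposal is correct and follows essentially the same route as the paper: part (1) is read off from Lemma \ref{5cl3}, and part (2) is deduced from the GWA presentations of $B_{\alpha,0}$ and $B_{0,\beta}$ in \cite{sl} together with Kitchin's computation of derivations of quantum GWAs over $\k[h^{\pm 1}]$ in \cite{ak}. The extra remarks you add (non-innerness of the scalar derivation, freeness of $\HH^1$ over the center) are exactly what the cited \cite[Proposition 1.3]{ak} packages, so there is no substantive difference.
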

\begin{proof}
The first point was proved in Lemma \ref{5cl3}. For the second point, observe that $B_{\alpha,0}$ and $B_{0,\beta}$ are both quantum GWAs by \cite[Propositions 3.9 and 3.10]{sl2} and so the result follows from \cite[Proposition 1.3]{ak}.
\end{proof}

\section{Open questions}

As explained in the introduction, simple quotients of (uniparameter) QNAs can be thought of as ``algebraic deformation'' of Weyl algebras. However, some of them may behave differently to Weyl algebras. For instance, they may contain nontrivial units, and this fact was for instance used to prove that endomorphisms of GWAs over a Laurent polynomial ring in one variable  are automorphisms (see \cite{ak1}). Closer to the main topic of this paper, some simple quotients have only inner derivations whereas some have larger first Hochschild cohomology group. This leads us to the following natural questions: 

\begin{quest} 
\label{question 1}
Can we classify simple quotients of uniparameter QNAs for which all derivations are inner? 
\end{quest}


\begin{quest}
If $A$ is a simple quotient of a uniparameter QNA, are all endomorphisms of $A$ automorphisms? 
\end{quest}
 
General results regarding Question \ref{question 1} have been obtained while preparing this note and will be published in a forthcoming publication \cite{llo}.

\bibliographystyle{amsalpha}

\end{document}